\definecolor{citegreen}{rgb}{0,0.6,0}
\definecolor{refred}{rgb}{0.8,0,0}
\title{A renormalized Perelman-functional and a lower bound for the ADM-mass}
\author{Robert Haslhofer}
\date{}
\theoremstyle{plain}
\newtheorem{theorem}{Theorem}[section]
\theoremstyle{definition}
\newtheorem{definition}[theorem]{Definition}
\theoremstyle{remark}
\newtheorem{remark}[theorem]{Remark}
\providecommand{\abs}[1]{\lvert #1\rvert}
\DeclareMathOperator{\Hess}{Hess}
\DeclareMathOperator{\Rm}{Rm}
\DeclareMathOperator{\Rc}{Rc}
\newcommand{\RR}{\mathbb{R}}
\newcommand{\eps}{\varepsilon}
\newcommand{\Lap}{\triangle}
\newcommand{\D}{\nabla}
\newcommand{\tr}{\mathrm{tr}}
\renewcommand{\div}{\mathrm{div}}
\begin{document}

\maketitle

\begin{abstract}
In the first part of this short article, we define a renormalized $\mathcal{F}$-functional for perturbations of non-compact steady Ricci solitons.  This functional motivates a stability inequality which plays an important role in questions concerning the regularity of Ricci-flat spaces and the non-uniqueness of the Ricci flow with conical initial data. In the second part, we define a geometric invariant $\lambda_{AF}$ for asymptotically flat manifolds with nonnegative scalar curvature. This invariant gives a quantitative lower bound for the ADM-mass from general relativity, motivates a Ricci flow proof of the rigidity statement in the positive mass theorem, and eventually leads to the discovery of a mass decreasing flow in dimension three.
\end{abstract}

\maketitle

\section*{Introduction}
The purpose of this short article is to introduce some concepts showing an intriguing relationship between Perelman's energy-functional, the stability of Ricci-flat spaces, and the ADM-mass from general relativity. These ideas will be exploited further in our subsequent papers \cite{H2} and \cite{HSiep}.\\

To start with, in his famous paper \cite{P}, Perelman introduced the energy-functional
\begin{equation}
\mathcal{F}(g,f)=\int_M\left(R+\abs{\D f}^2\right)e^{-f}dV,
\end{equation}
for a metric $g$ and a function $f$ on a closed manifold $M$. If $g$ evolves by Hamilton's Ricci flow and $e^{-f}$ by the adjoint heat equation, we have the fundamental monotonicity formula
\begin{equation}
\partial_t\mathcal{F}=2\int_M\abs{\Rc+\D^2f}^2e^{-f}dV\geq 0,
\end{equation}
with equality precisely on steady gradient Ricci solitons. Ricci solitons (for a recent survey see \cite{C}) are the fixed points of the Ricci flow up to diffeomorphism and scaling, and play a crucial role in understanding the formation of singularities.\\

In fact, all steady solitons on a \emph{closed} manifold are Ricci-flat, but there exist nontrivial gradient steadies on \emph{complete} manifolds. A nice example is the rotationally symmetric steady gradient Ricci soliton on $\RR^n (n\geq 3)$ discovered by Bryant \cite{Bry}. The Bryant soliton has positive curvature and looks like a paraboloid, where the spheres of geodesic radius $r$ have diameter of order $\sqrt{r}$. The scalar curvature decays like $\tfrac{1}{r}$ and the potential $f$ behaves asymptotically like $-r$. From these asymptotics it is immediate that $\mathcal{F}=\infty$ on the Bryant soliton.\\

Nevertheless, we manage to make sense of the $\mathcal{F}$-functional in the situation where $(M,g_s,f_s)$ is a non-compact steady gradient Ricci soliton (e.g. the Bryant soliton) and $g$ and $f$ are a nearby metric and function on $M$. The idea is to consider the relative energy $\mathcal{F}^{(g_s,f_s)}(g,f):=\mathcal{F}(g,f)-\mathcal{F}(g_s,f_s)$, where $\infty-\infty$ gives a finite quantity when suitably interpreted. Actually, we find it technically more convenient to give a slightly different definition, and we call the resulting functional the \emph{renormalized energy} $\mathcal{F}^{(g_s,f_s)}$ (Definition \ref{renormalizedf}).\\

It turns out that $(g_s,f_s)$ is a critical point of this functional if the possibly infinite total measure $\int_Me^{-f}dV$ is kept fixed (Theorem \ref{firstvariation}). The second variation at $(g_s,f_s)$ is of fundamental importance (Theorem \ref{2ndvarthm}). It motivates us to introduce the \emph{stability inequality} for steady gradient Ricci solitons,
\begin{equation}\label{stabineq}
\int_M\left[-\tfrac{1}{2}\abs{\D h}^2+\Rm(h,h)\right]e^{-f_s}dV\leq 0
\end{equation}
for all $h\in\ker\div_{f_s}$ with compact support. Here $\Rm(h,h)=R_{ijkl}h_{ik}h_{jl}$ and we restrict to variations $h$ satisfying $\div(e^{-f_s}h)=0$. Applications of inequality (\ref{stabineq}) appear in \cite{HSiep}. In particular, we will show that many Ricci-flat cones in small dimensions are unstable, and discuss a conjecture of Tom Ilmanen relating the stability of Ricci-flat cones, the existence of positive scalar curvature deformations, and the non-uniqueness of Ricci flow with conical initial data (see also \cite{H} for ancient Ricci flows coming out of unstable closed Ricci-flat manifolds, and \cite{SS} for Ricci flows coming out of positively curved cones).\\

For asymptotically flat manifolds $(M,g_{ij})$ with nonnegative scalar curvature, we also define a geometric invariant $\lambda_{AF}(g)$ (Definition \ref{renormalizedlambda}), motivated by Perelman's $\lambda$-functional for closed manifolds. This invariant is closely related with the positive mass theorem in general relativity. Recall that the ADM-mass \cite{ADM,Ba} is defined as
\begin{equation}\label{admmass}
m_{ADM}(g):=\lim_{r\to\infty}\int_{S_r}\left(\partial_jg_{ij}-\partial_ig_{jj}\right)dA^{i}.
\end{equation}
Physical arguments suggest that the mass is always nonnegative. Schoen and Yau proved that this is indeed the case in dimension $n\leq 7$ using minimal surface techniques \cite{SY}. Huisken and Ilmanen found a different proof for $n=3$ based on the inverse mean curvature flow \cite{HI}. Witten discovered a proof for spin manifolds of arbitrary dimension using Dirac spinors \cite{W}. Recently, Lohkamp announced a proof of the positive mass theorem valid in all dimensions and without spin assumption based on singular minimal surface techniques \cite{L}.\\

For the sake of exposition, the following discussion contains some facts well known to experts. Indeed, it is well known that conformal deformations can decrease the mass and that deformations in direction of the Ricci curvature can be used to prove the rigidity statement in the positive mass theorem. Our main motivation is to explain the relationship with the Ricci flow and Perelman's $\lambda$-functional.\\

This said, it is an interesting problem to find quantitative lower bounds for the ADM-mass in terms of other geometric quantities. In fact, the Penrose inequality gives such a quantitative lower bound in terms of the area of the outermost minimal surface, at least for $n\leq 7$ \cite{HI,Bra,BL}. Our geometric invariant $\lambda_{AF}(g)$ also gives a quantitative lower bound for the ADM-mass, i.e.
\begin{equation}\label{lmineq}
m_{ADM}(g)\geq \lambda_{AF}(g),
\end{equation}
at least for spin-manifolds or in dimension $n\leq 7$. For spin-manifolds this directly follows from Witten's formula (Theorem \ref{lambdamassineq}), and in fact the positive mass theorem combined with a conformal transformation implies a slightly stronger lower bound (Theorem \ref{posmassineq}). The discrepancy comes from the difference between the operator $-4\Lap+R$ and the conformal Laplacian $-\tfrac{4(n-1)}{n-2}\Lap+R$ and disappears in the limit $n\to\infty$. Our motivation for stating the inequality in the weaker form (\ref{lmineq}) is that this is better adapted to Ricci flow techniques. As an application, we give a Ricci flow proof of the rigidity statement in the positive mass theorem (Theorem \ref{rigidity}).\\

Guided by these ideas, we can revisit the pressing question if there exists a geometric flow that decreases the ADM-mass. Indeed, we discovered such a flow in dimension three, and this flow conjecturally squeezes out all the mass of an asymptotically flat $3$-manifold with nonnegative scalar curvature. Our flow is based on conformal rescalings and the Ricci flow with surgery and will be discussed in \cite{H2}.\\

This article is organized as follows: Section \ref{variationalstructure} is about $\mathcal{F}^{(g_s,f_s)}$ and the stability inequality. Section \ref{positivemass} is about $\lambda_{AF}$ and the positive mass theorem.\\

\emph{Acknowledgements.} I thank Tom Ilmanen and Richard Schoen for interesting discussions, Simon Brendle, Stanley Deser, Reto M\"{u}ller, Michael Siepmann and Miles Simon for useful comments, and the Swiss National Science Foundation for partial financial support.

\section{The renormalized $\mathcal{F}$-functional}\label{variationalstructure}

Let $(M,g_s,f_s)$ be a \emph{steady gradient Ricci soliton}. This means that $(M,g_s)$ is a smooth, connected, complete Riemannian manifold and $f_s:M\to \RR$ is a smooth function such that the following equation holds:
\begin{equation}
\Rc(g_s)+\Hess_{g_s}(f_s)=0.
\end{equation}
Steady gradient Ricci solitons always have nonnegative scalar curvature and correspond to eternal Ricci flows moving only by a diffeomorphism \cite{Z}.

\begin{definition}\label{renormalizedf}
Let $c_s:=R(g_s)+\abs{\D f_s}_{g_s}^2\geq 0$ be the \emph{central charge} (or auxilary constant) of the steady soliton. We define the \emph{renormalized energy}
\begin{equation}
\mathcal{F}^{(g_s,f_s)}(g,f):=\int_M \left(R(g)+\abs{\D f}_g^2-c_s\right)e^{-f}dV_g.
\end{equation}
\end{definition}
By construction, this is well defined and finite if $(g-g_s,f-f_s)$ has compact support (or decays sufficiently fast), in particular, $\mathcal{F}^{(g_s,f_s)}(g_s,f_s)=0$.\\

The variational structure of compact Ricci-flat metrics has been discussed in \cite{CHI} and \cite{H}. We will now carry out a similar discussion for non-compact gradient steady Ricci solitons using our renormalized functional $\mathcal{F}^{(g_s,f_s)}$. Let $g$ be a metric, $h$ a symmetric 2-tensor, and $f$ and $l$ functions on $M$.

\begin{theorem}\label{firstvariation}
If $(g-g_s,f-f_s)$ and $(h,l)$ have compact support (or decay sufficiently fast), then the first variation of $\mathcal{F}^{(g_s,f_s)}$ at $(g,f)$ is well defined and given by the following formula:
\begin{align}\label{firstvarformula}
&\tfrac{d}{d\eps}|_0 \mathcal{F}^{(g_s,f_s)}(g+\eps h,f+\eps l)\\
&\qquad\qquad=\int_M\left[-\langle h,\Rc+\D^2f\rangle+\left(\tfrac{1}{2}\tr h-l\right)\left(2\Lap f-\abs{\D f}^2+R-c_s\right)\right]e^{-f}dV.\nonumber
\end{align}
In particular, $(g_s,f_s)$ is a critical point, if the (possibly infinite) total measure $\int_M e^{-f}dV$ is kept fixed in the sense that $\int_M\left(\tfrac{1}{2}\tr h-l\right)e^{-f}dV=0$.
\end{theorem}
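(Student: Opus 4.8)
The plan is to run Perelman's first-variation computation for $\mathcal{F}$, keeping careful track of the renormalization constant $c_s$ and of the role of the decay hypotheses in the non-compact setting. First I would record why everything is well defined: if $(g-g_s,f-f_s)$ and $(h,l)$ are supported in a common compact set $K$, then outside $K$ one has $g+\eps h\equiv g_s$ and $f+\eps l\equiv f_s$, so the integrand $\big(R(g+\eps h)+\abs{\D(f+\eps l)}^2-c_s\big)e^{-(f+\eps l)}dV_{g+\eps h}$ vanishes there (this is precisely the point of subtracting $c_s$), hence we are differentiating a smooth family of compactly supported densities and $\tfrac{d}{d\eps}|_0$ passes under the integral; the fast-decay case is identical with ``integrable with vanishing flux at infinity'' replacing ``compactly supported''.

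Next I would assemble the variation from the standard linearizations $\tfrac{d}{d\eps}|_0R(g+\eps h)=-\Lap(\tr h)+\div\div h-\langle h,\Rc\rangle$ (with $\div\div h=\D^i\D^j h_{ij}$), $\tfrac{d}{d\eps}|_0dV_{g+\eps h}=\tfrac12(\tr h)dV$, $\tfrac{d}{d\eps}|_0\abs{\D(f+\eps l)}_{g+\eps h}^2=-h(\D f,\D f)+2\langle\D f,\D l\rangle$, and $\tfrac{d}{d\eps}|_0e^{-(f+\eps l)}=-l\,e^{-f}$, while $c_s$, being a fixed constant, contributes nothing. The product rule then yields
\begin{align*}
&\tfrac{d}{d\eps}|_0\,\mathcal{F}^{(g_s,f_s)}(g+\eps h,f+\eps l)\\
&=\int_M\big(-\Lap\tr h+\div\div h-\langle h,\Rc\rangle-h(\D f,\D f)+2\langle\D f,\D l\rangle\big)e^{-f}\,dV\\
&\quad+\int_M\big(R+\abs{\D f}^2-c_s\big)\big(\tfrac12\tr h-l\big)e^{-f}\,dV .
\end{align*}

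The third step is the weighted integration by parts. Using $\div(e^{-f}\D f)=(\Lap f-\abs{\D f}^2)e^{-f}$ and the vanishing of all boundary terms, one obtains $\int_M(\Lap\tr h)e^{-f}dV=-\int_M\tr h\,(\Lap f-\abs{\D f}^2)e^{-f}dV$, $\int_M2\langle\D f,\D l\rangle e^{-f}dV=-\int_M2l\,(\Lap f-\abs{\D f}^2)e^{-f}dV$, and, after a double integration by parts, $\int_M(\div\div h)e^{-f}dV=-\int_M\big(\langle h,\D^2f\rangle-h(\D f,\D f)\big)e^{-f}dV$. Substituting these back, the two $h(\D f,\D f)$-terms cancel, leaving $-\langle h,\Rc+\D^2f\rangle$ together with terms linear in $\tr h$ and $l$; regrouping the latter by the identity $A\cdot\tr h-2A\cdot l+B\big(\tfrac12\tr h-l\big)=\big(\tfrac12\tr h-l\big)(2A+B)$, applied with $A=\Lap f-\abs{\D f}^2$ and $B=R+\abs{\D f}^2-c_s$ (so that $2A+B=2\Lap f-\abs{\D f}^2+R-c_s$), gives exactly (\ref{firstvarformula}).

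Finally, for the critical-point statement I would evaluate (\ref{firstvarformula}) at $(g_s,f_s)$: the soliton equation makes $\Rc+\D^2f_s$ vanish, and its trace gives $\Lap f_s=-R(g_s)$, whence $2\Lap f_s-\abs{\D f_s}^2+R(g_s)-c_s=-R(g_s)-\abs{\D f_s}^2-c_s=-2c_s$ is constant; therefore the first variation equals $-2c_s\int_M\big(\tfrac12\tr h-l\big)e^{-f_s}dV_{g_s}$, which vanishes exactly under the constraint $\int_M(\tfrac12\tr h-l)e^{-f_s}dV_{g_s}=0$. I anticipate no genuine analytic obstacle -- once the support/decay hypotheses are invoked to guarantee convergence and the vanishing of the boundary fluxes at infinity, the computation is literally the closed-manifold one -- so the only step requiring real care is the sign bookkeeping in the integration-by-parts and regrouping that turns the raw variation into the factored form (\ref{firstvarformula}).
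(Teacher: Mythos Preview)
Your proposal is correct and follows the same route as the paper: the paper simply says that (\ref{firstvarformula}) ``follows from a computation as in \cite{P}'' and then derives $2\Lap f_s-\abs{\D f_s}^2+R-c_s=-2c_s$ from the traced soliton equation, which is exactly what you carry out in full. Your detailed integration-by-parts bookkeeping and the regrouping via $2A+B$ are precisely the steps hidden behind the reference to Perelman, and your treatment of the critical-point statement matches the paper's (\ref{auxaux}).
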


\begin{proof}
Equation (\ref{firstvarformula}) follows from a computation as in \cite{P}. Moreover, by the traced soliton equation and the definition of $c_s$ we have
\begin{equation}\label{auxaux}
2\Lap_{g_s}f_s-\abs{\D f_s}_{g_s}^2+R_{g_s}-c_s=-2c_s,
\end{equation}
and the last statement follows easily.
\end{proof}

In the next theorem, we will use the notation $\div_f(\cdot)=e^{f}\div(e^{-f}\cdot)$.

\begin{theorem}\label{2ndvarthm}
If $(h,l)$ has compact support (or decays sufficiently fast), then:
\begin{align}
&\tfrac{d^2}{d\eps^2}|_0 \mathcal{F}^{(g_s,f_s)}(g_s+\eps h,f_s+\eps l)\\
&\qquad=\int_M\left[-\tfrac{1}{2}\abs{\D h}^2+\Rm(h,h)+\abs{\div_{f_s} h}^2\right]e^{-f_s}dV\nonumber\\
&\qquad+2\int_M\left[\abs{\D\left(\tfrac{1}{2}\tr h-l\right)}^2+\left(\tfrac{1}{2}\tr h-l\right)\div_{f_s}\div_{f_s} h-c_s\left(\tfrac{1}{2}\tr h-l\right)^2\right]e^{-f_s}dV.\nonumber
\end{align}
\end{theorem}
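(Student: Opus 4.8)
The plan is to obtain the second variation by differentiating the first variation formula \eqref{firstvarformula} once more, along the linear path $\eps\mapsto(g_\eps,f_\eps):=(g_s+\eps h,f_s+\eps l)$. I set $F(\eps):=\mathcal{F}^{(g_s,f_s)}(g_\eps,f_\eps)$; applying Theorem~\ref{firstvariation} at the point $(g_\eps,f_\eps)$ in the direction $(h,l)$ gives $F'(\eps)=\int_M\big[-\langle h,\Rc+\D^2 f\rangle+(\tfrac12\tr h-l)(2\Lap f-\abs{\D f}^2+R-c_s)\big]e^{-f}dV$, with all quantities evaluated at $(g_\eps,f_\eps)$, so that $F''(0)=\tfrac{d}{d\eps}|_0 F'(\eps)$. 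Since $(h,l)$ has compact support, all integrals are over a fixed compact set, so there are no convergence issues and no boundary terms in the integrations by parts below. Differentiating $F'(\eps)$ at $\eps=0$, the derivative acts on three places: the weighted measure $e^{-f}dV$, the term $-\langle h,\Rc+\D^2 f\rangle$, and the term $(\tfrac12\tr h-l)(2\Lap f-\abs{\D f}^2+R-c_s)$. The two soliton identities $\Rc(g_s)+\D^2 f_s=0$ and \eqref{auxaux} are what make this tractable: whenever the factor $\Rc+\D^2 f$, respectively the scalar factor $2\Lap f-\abs{\D f}^2+R-c_s$, is left undifferentiated it equals $0$, respectively $-2c_s$, at $(g_s,f_s)$.

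Write $w:=\tfrac12\tr h-l$; then $\tfrac{d}{d\eps}|_0\tr_{g_\eps}h=-\abs{h}^2$ and $\tfrac{d}{d\eps}|_0(e^{-f}dV)=w\,e^{-f_s}dV$. The measure variation multiplies the integrand of $F'(0)$, which at $(g_s,f_s)$ equals $-2c_s w$, and hence contributes $-2c_s\int_M w^2 e^{-f_s}dV$. For the third term the key step is to compute the linearization $\delta\big(2\Lap f-\abs{\D f}^2+R-c_s\big)$ (here $\delta$ denotes the $\eps$-derivative at $\eps=0$) from the standard linearizations of $\Lap f$, $\abs{\D f}^2$ and $R$; using the traced soliton equation $R+\Lap f_s=0$ and $\Rc=-\D^2 f_s$, this collapses to the compact form $-2\Lap_{f_s}w+\div_{f_s}\div_{f_s}h$, where $\Lap_{f_s}:=\Lap-\langle\D f_s,\D\,\cdot\,\rangle$ is the drift Laplacian. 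Since $\Lap_{f_s}$ is self-adjoint with respect to $e^{-f_s}dV$, multiplying by $w$ and integrating yields $\int_M\big[2\abs{\D w}^2+w\,\div_{f_s}\div_{f_s}h\big]e^{-f_s}dV$. Together with the measure contribution, this already accounts for the entire second line of the asserted formula.

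The main work is the second term, $-\langle h,\delta\Rc[h]+\delta(\D^2 f)[h,l]\rangle$. Here I would insert the standard linearization of the Ricci tensor (the Lichnerowicz Laplacian together with the $-\tfrac12\D^2\tr h$ and the symmetrized $\D\div h$ terms) and the linearization of the Hessian (which produces $\D^2 l$ and a term $\delta\Gamma$ contracted with $\D f_s$), and then integrate by parts against $e^{-f_s}dV$. This is where the weighted divergence $\div_{f_s}$ and the identity $\div_{f_s}\div_{f_s}h=\div\div h-2\langle\div h,\D f_s\rangle+\langle\Rc,h\rangle+h(\D f_s,\D f_s)$ enter, and where one repeatedly uses $\Rc=-\D^2 f_s$ together with $\Lap f_s-\abs{\D f_s}^2=-c_s$. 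After collecting terms, the Ricci-curvature pieces coming from the Lichnerowicz Laplacian recombine --- via $\Rc=-\D^2 f_s$ --- with the Hessian-variation pieces, the $\D^2\tr h$ and $\D^2 l$ terms assemble into $w\,\div_{f_s}\div_{f_s}h$, and the remaining first-order terms assemble into $\abs{\div_{f_s}h}^2$, leaving exactly $\int_M\big[-\tfrac12\abs{\D h}^2+\Rm(h,h)+\abs{\div_{f_s}h}^2+w\,\div_{f_s}\div_{f_s}h\big]e^{-f_s}dV$. Summing the three contributions gives the claim. The hard part will be precisely this last step: tracking the curvature and first-order terms through the several integrations by parts and checking that all zeroth-order ($c_s$-weighted) pieces collapse into the single term $-2c_s w^2$. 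This is essentially the computation of \cite{P}, the one extra subtlety being that $f_s$ is unbounded, so every integration by parts must be justified by the compact support of $(h,l)$.
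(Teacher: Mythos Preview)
Your approach is essentially the same as the paper's: both differentiate the first-variation formula along $\eps\mapsto(g_s+\eps h,f_s+\eps l)$, compute the linearizations $\delta(\Rc+\D^2 f)$ and $\delta(2\Lap f-\abs{\D f}^2+R)$ at the soliton using the soliton identities, and then integrate by parts against $e^{-f_s}dV$. The only presentational difference is that the paper quotes the closed form $\delta(\Rc+\D^2 f)=-\tfrac12\Lap_{f_s}h-\Rm(h,\cdot)-\div_{f_s}^\ast\div_{f_s}h+\D^2(l-\tfrac12\tr h)$ directly (citing \cite[Lemma~2.3]{CZ}) rather than assembling it from the Lichnerowicz formula and the Hessian variation as you propose; once that formula is in hand the remaining integrations by parts are identical to what you outline.
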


\begin{proof}
We write $(g,f)=(g_s,f_s)$ and $(g_\eps,f_\eps)=(g+\eps h,f+\eps l)$. Similar as in \cite[Lemma 2.3]{CZ}, using the soliton equation we obtain
\begin{equation}
\tfrac{d}{d\eps}|_0\left(\Rc_{g_\eps}+\Hess_{g_\eps}f_\eps\right)
=-\tfrac{1}{2}\Lap_fh -\Rm(h,.)-\div_f^\ast\div_f h+\D^2(l-\tfrac{1}{2}\tr h),
\end{equation}
where $\Lap_f=\Lap-\D f\cdot \D$, and $\div_f^\ast$ is the formal $L^2(M,e^{-f}dV)$-adjoint of $\div_f$. Another computation using the soliton equation shows
\begin{align}
&\tfrac{d}{d\eps}|_0\left(2\Lap_{g_\eps} f_\eps-\abs{\D f}^2_{g_\eps}+R_{g_\eps}\right)=2\Lap_f(l-\tfrac{1}{2}\tr h)+\div_f\div_f h.
\end{align}
Using this, Theorem \ref{firstvariation}, the soliton equation, and (\ref{auxaux}) we obtain
\begin{align}
&\tfrac{d^2}{d\eps^2}|_0 \mathcal{F}^{(g,f)}(g_\eps,f_\eps )\\
&\qquad=\int_M\left[\tfrac{1}{2}\langle h,\Lap_f h\rangle+\Rm(h,h)+\langle h,\div_f^\ast\div_f h\rangle-\langle h,\D^2(l-\tfrac{1}{2}\tr h)\rangle\right]e^{-f}dV\nonumber\\
&\qquad+\int_M\left[\left(\tfrac{1}{2}\tr h-l\right)\left(2\Lap_f\left(l-\tfrac{1}{2}\tr h\right)+\div_f\div_f h\right)-2c_s\left(\tfrac{1}{2}\tr h-l\right)\right]e^{-f}dV,\nonumber
\end{align}
and the claim follows from partial integration.
\end{proof}
Restricting to $h\in\ker\div_{f_s}$ (this corresponds to choosing a slice for the action of the diffeomorphism group) and setting $l=\tfrac{1}{2}\tr h$ (this corresponds to keeping the measure $e^{-f}dV$ fixed), Theorem \ref{2ndvarthm} motivates the stability inequality (\ref{stabineq}).

\begin{remark}
In particular, the \emph{stability inequality for Ricci-flat manifolds or cones} is
\begin{equation}
\int_M 2\Rm(h,h)dV\leq \int_M\abs{\D h}^2dV
\end{equation}
for all $h\in\ker\div$ with compact support. As mentioned in the introduction, important consequences of this inequality are discussed in \cite{HSiep}.
\end{remark}

\section{$\lambda_{AF}$ and the positive mass theorem}\label{positivemass}

For perturbations of $(M,g_s,f_s)=(\RR^n,\delta,0)$ with nonnegative scalar curvature, or more generally for asymptotically flat manifolds with nonnegative scalar curvature we define a functional $\lambda_{AF}$ as follows:

\begin{definition}\label{renormalizedlambda}

Assume $(M^n,g_{ij})$ is a complete asymptotically flat manifold of order $\tau>\tfrac{n-2}{2}$ with nonnegative scalar curvature. We define
\begin{equation}
\lambda_{AF}(g):=\inf \int_M\left(4\abs{\D w}^2+Rw^2\right)dV,
\end{equation}
where the infimum is taken over all $w\in C^\infty(M)$ such that $w=1+O(r^{-\tau})$ at infinity (here the $O$ notation includes the condition that the derivatives also decay appropriately).
\end{definition}

\begin{remark}
We could also define renormalized $\lambda$-functionals for perturbations of non-flat steadies $(M,g_s,f_s)$. However, in that case we find it technically more convenient to work with $\mathcal{F}^{(g_s,f_s)}$ and to fix the measure $e^{-f}dV$.
\end{remark}

\begin{theorem}\label{lambdamassineq}
Assume $(M^n,g_{ij})$ is a complete asymptotically flat spin manifold of order $\tau>\tfrac{n-2}{2}$ with nonnegative scalar curvature. Then
\begin{equation}\label{keyineq}
m_{ADM}(g)\geq \lambda_{AF}(g).
\end{equation}
\end{theorem}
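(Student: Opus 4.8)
The plan is to combine Witten's spinorial proof of the positive mass theorem with the variational characterization of $\lambda_{AF}(g)$. The starting point is Witten's identity: on an asymptotically flat spin manifold with nonnegative scalar curvature, for a spinor $\psi$ that is asymptotic to a constant spinor $\psi_0$ of unit norm at infinity and satisfies the Dirac equation $D\psi = 0$, one has the Lichnerowicz--Weitzenb\"ock-type formula
\begin{equation}
\tfrac{1}{4}\, m_{ADM}(g)\, \abs{\psi_0}^2 = \int_M\left(\abs{\D\psi}^2 + \tfrac{1}{4}R\abs{\psi}^2\right)dV,
\end{equation}
with the usual normalization (the exact constant $4$ versus $\tfrac14$ will have to be matched to the conventions fixed by Definition \ref{renormalizedlambda}, which uses the operator $-4\Lap + R$). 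First I would recall the existence theory: solving $D\psi = 0$ with $\psi \to \psi_0$ is standard for $\tau > \tfrac{n-2}{2}$ via the invertibility of the Dirac operator on suitable weighted Sobolev spaces, and the decay rate on $g$ guarantees the boundary term at infinity converges to the ADM-mass.

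Next I would pass from the spinor to a scalar test function by setting $w := \abs{\psi}$. The pointwise Kato inequality gives $\abs{\D\abs{\psi}}^2 \le \abs{\D\psi}^2$ almost everywhere, so that
\begin{equation}
\int_M\left(\abs{\D w}^2 + \tfrac{1}{4}R w^2\right)dV \le \int_M\left(\abs{\D\psi}^2 + \tfrac{1}{4}R\abs{\psi}^2\right)dV = \tfrac{1}{4}\,m_{ADM}(g),
\end{equation}
using $R \ge 0$ to control the curvature term even where $w$ is not smooth. The asymptotics $\psi = \psi_0 + O(r^{-\tau})$ translate into $w = 1 + O(r^{-\tau})$ at infinity (after normalizing $\abs{\psi_0}=1$), with the derivative decay conditions inherited from the spinor asymptotics, so $w$ is an admissible competitor in the infimum defining $\lambda_{AF}(g)$ — modulo a smoothing argument, since $w$ is only Lipschitz near the zero set of $\psi$. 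Taking into account the constant $4$ in Definition \ref{renormalizedlambda}, rescaling the inequality appropriately yields $m_{ADM}(g) \ge \lambda_{AF}(g)$.

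The main obstacle is the regularity of $w = \abs{\psi}$: it is continuous and lies in $W^{1,2}_{\mathrm{loc}}$ but need not be smooth where $\psi$ vanishes, whereas the infimum in Definition \ref{renormalizedlambda} is over $C^\infty$ functions. I would handle this either by first establishing that $\lambda_{AF}(g)$ is unchanged if the competitor class is enlarged to $W^{1,2}_{\mathrm{loc}}$ functions with the correct asymptotics and finite energy (a routine density/cutoff argument, using $R \ge 0$ so that the energy functional is weakly lower semicontinuous and mollification only decreases it in the limit), or by working with $w_\delta := \sqrt{\abs{\psi}^2 + \delta^2} - \delta$, which is smooth, satisfies the same Kato-type bound up to an error that vanishes as $\delta \to 0$, and has the right asymptotic behavior. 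A secondary technical point is bookkeeping the decay assumptions so that all integrations by parts in Witten's identity are justified and the boundary integral is exactly $m_{ADM}(g)$; this is classical for $\tau > \tfrac{n-2}{2}$ and I would simply cite the standard references \cite{W, Ba, PT} rather than reproduce it.
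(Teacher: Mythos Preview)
Your approach is essentially identical to the paper's: invoke Witten's formula for the ADM-mass in terms of the asymptotically constant harmonic spinor, take $w=\abs{\psi}$ as a test function in the definition of $\lambda_{AF}$, and apply Kato's inequality. The paper's proof is a three-line sketch that does not address the smoothness of $\abs{\psi}$ at the zero set, so your discussion of the mollification $w_\delta=\sqrt{\abs{\psi}^2+\delta^2}-\delta$ (or the density argument) is in fact more careful than what appears there.
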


\begin{proof}
By Witten's formula \cite{W,Ba,LP} for the mass of spin manifolds,
\begin{equation}\label{witten}
m_{ADM}(g)=\int_M\left(4\abs{\nabla\psi}^2+R\abs{\psi}^2\right)dV,
\end{equation}
where $\psi$ is a Dirac spinor with asymptotically unit norm. Thus, recalling Definiton \ref{renormalizedlambda}, using $w=\abs{\psi}$ as a test function, and using Kato's inequality, the claim immediately follows.
\end{proof}

\begin{remark}
Note that $\lambda_{AF}$ and $m_{ADM}$ are finite if and only if the scalar curvature is integrable.
\end{remark}

\begin{remark}
The physical interpretation of Witten's formula (\ref{witten}) is that the Dirac spinor with asymptotic boundary conditions is a test field that measures the mass of the gravitational field. Similarly, one of our motivations for Definiton \ref{renormalizedlambda} is to minimize over all test fields $w=e^{-f/2}$ with the asymptotic boundary conditions coming from the trivial potential $f_s=0$. Instead of the Dirac field, we use a Klein-Gordon field. This test field interpretation of the mass also motivates (\ref{lmineq}).
\end{remark}

As mentioned in the introduction the inequality (\ref{lmineq}) is well adapted to Ricci flow techniques, but not sharp in general. From the refined Kato inequality for Dirac spinors, $\abs{\D\abs{\psi}}\leq\sqrt{1-1/n}\abs{\nabla\psi}$, one gets a slightly better but still not sharp bound. To find the sharp inequality, let us consider the example of the spatial Schwarzschild metric $g_{ij}=\left(1+cr^{2-n}\right)^{4/(n-2)}\delta_{ij}$ on $M=\RR^n\setminus \{0\}$, where $c=\tfrac{m}{4(n-1)\abs{S^{n-1}}}$. Since this manifold has two ends, what we really should do is impose the boundary conditions $w\to 0$ for $r\to 0$ and $w\to 1$ for $r\to\infty$. The rotationally symmetric solution of $\Lap w=0$ with the given asymptotic boundary conditions is $w(r)=\left(1+cr^{2-n}\right)^{-1}$. Now, a straightforward computation shows that
\begin{equation}
4\tfrac{n-1}{n-2}\int_M\abs{\D w}^2dV=4(n-1)(n-2)\abs{S^{n-1}}c^2\int_0^\infty(1+cr^{2-n})^{-2}r^{1-n}dr=m
\end{equation}
equals the ADM-mass of the end in consideration. This example motivates the following theorem.

\begin{theorem}\label{posmassineq}
Assume $(M^n,g_{ij})$ is a complete asymptotically flat manifold of order $\tau>\tfrac{n-2}{2}$ with nonnegative scalar curvature. Then there exists a unique positive solution of the equation
\begin{equation}
\left(-\tfrac{4(n-1)}{n-2}\Lap+R\right)w=0,
\end{equation}
with $w\to1$ at infinity. If moreover $M$ is spin or $n\leq 7$, then we have the lower bound
\begin{equation}\label{keyineq}
m_{ADM}(g)\geq \int_M\left(\tfrac{4(n-1)}{(n-2)}\abs{\nabla w}^2+Rw^2\right)dV.
\end{equation}
\end{theorem}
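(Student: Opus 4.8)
The plan is to prove the statement in two parts: first the existence and uniqueness of the positive solution $w$, then the mass inequality. For the existence part, I would set $w = 1 + v$ and rewrite the equation as $-\tfrac{4(n-1)}{n-2}\Lap v + Rv = -R$ (using $\Lap 1 = 0$), which is a linear elliptic equation for the decaying perturbation $v$. Since $R \geq 0$ is integrable (the decay order $\tau > \tfrac{n-2}{2}$ guarantees this) and nonnegative, the operator $L = -\tfrac{4(n-1)}{n-2}\Lap + R$ has trivial kernel on the relevant weighted Sobolev space (a function in the kernel decaying at infinity must vanish by the maximum principle, or by integration by parts: $\int (\tfrac{4(n-1)}{n-2}|\D v|^2 + Rv^2) = 0$ forces $v = 0$). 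Standard asymptotically-flat elliptic theory on weighted Sobolev or H\"older spaces (Bartnik, or Lee--Parker for the conformal Laplacian specifically) then gives a unique solution $v = O(r^{-\tau})$, hence a unique $w \to 1$ at infinity. Positivity of $w$ follows from the maximum principle: since $R \geq 0$, $L$ satisfies a maximum principle, $w \to 1 > 0$ at infinity, and $w$ cannot attain a nonpositive interior minimum; alternatively one notes $w > 0$ because otherwise its zero set would violate the strong maximum principle for $Lw = 0$ with $R \geq 0$.

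For the inequality, the key observation is that $w$ is precisely the conformal factor bringing $g$ to a scalar-flat metric. Writing $\tilde{g} = w^{4/(n-2)} g$, the conformal transformation law for scalar curvature gives $R_{\tilde g} = w^{-\frac{n+2}{n-2}} L w = 0$, so $\tilde g$ is asymptotically flat (the decay of $w-1$ is fast enough) with zero scalar curvature. By the positive mass theorem — available for spin manifolds by Witten and for $n \leq 7$ by Schoen--Yau — we get $m_{ADM}(\tilde g) \geq 0$. The final step is the standard computation relating $m_{ADM}(\tilde g)$ to $m_{ADM}(g)$ and the energy of $w$: expanding the definition (\ref{admmass}) for the conformally rescaled metric and using $w = 1 + O(r^{-\tau})$ with $\Lap w = \tfrac{n-2}{4(n-1)} R w$, one finds
\begin{equation}
m_{ADM}(\tilde g) = m_{ADM}(g) - \tfrac{4(n-1)}{n-2}\lim_{r\to\infty}\int_{S_r}\partial_i w\, dA^i
= m_{ADM}(g) - \int_M\left(\tfrac{4(n-1)}{n-2}\abs{\D w}^2 + Rw^2\right)dV,
\end{equation}
where the last equality is integration by parts against the equation $Lw = 0$ (the boundary term at infinity producing exactly the flux integral, which has no interior contribution since $\Lap w$ is absorbed). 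Combining with $m_{ADM}(\tilde g) \geq 0$ yields (\ref{keyineq}).

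I expect the main technical obstacle to be the boundary-term bookkeeping in the mass comparison: one must justify that the cross terms and lower-order pieces in expanding $\partial_j \tilde g_{ij} - \partial_i \tilde g_{jj}$ for $\tilde g = w^{4/(n-2)}g$ vanish in the limit, which requires the decay hypothesis $\tau > \tfrac{n-2}{2}$ to control products like $(w-1)\partial g$ and $|\D w|^2$ integrated over $S_r$ — this is exactly the borderline order ensuring $R$ is integrable and the mass is well defined, so the estimates are tight but standard (cf. Lee--Parker, Schoen--Yau). A secondary point worth stating carefully is that the integration by parts $\int_M \D w \cdot \D(w-1)\,dV + \int_M \tfrac{n-2}{4(n-1)}Rw(w-1)\,dV = -\lim_r \int_{S_r}\partial_i w\, dA^i$ requires knowing the interior integrals converge, which again follows from integrability of $R$ and the decay of $w - 1$.
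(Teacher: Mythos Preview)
Your proposal is correct and follows essentially the same route as the paper: pass to the conformal metric $\tilde g = w^{4/(n-2)}g$, observe that $R(\tilde g)=0$, compute the mass difference $m_{ADM}(\tilde g) = m_{ADM}(g) - \int_M\bigl(\tfrac{4(n-1)}{n-2}\abs{\nabla w}^2 + Rw^2\bigr)\,dV$ via the boundary flux of $w$, and apply the positive mass theorem to $\tilde g$. One small correction: the decay order $\tau > \tfrac{n-2}{2}$ by itself does not force $R\in L^1$; integrability of the scalar curvature is an extra (standard) hypothesis, and indeed the paper remarks separately that $m_{ADM}$ is finite if and only if $R$ is integrable.
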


\begin{proof}
The proof is closely related with the article by Zhang-Zhang \cite{ZZ}, so we will be rather sketchy. The first part of the theorem follows from the fact that $R\geq 0$ by assumption. Now consider the conformal metric $\tilde{g}=w^{4/(n-2)}g$. Note that $R(\tilde{g})=0$ and that $(M,\tilde{g})$ is asymptotically flat. We compute
\begin{align}
m_{ADM}(\tilde{g})&=m_{ADM}(g)-\tfrac{4(n-1)}{(n-2)}\lim_{r\to\infty}\int_{S_r}\partial_rwdA\\
&=m_{ADM}(g)- \int_M\left(\tfrac{4(n-1)}{(n-2)}\abs{\nabla w}^2+Rw^2\right)dV,
\end{align}
and the claim follows from the positive mass theorem applied to $\tilde{g}$.
\end{proof}

\begin{remark}
It is interesting to observe that Kato's inequality for Witten's spinor becomes sharp in $L^2$ in the limit $n\to\infty$.\end{remark}

\begin{remark}
It is straightforward to generalize Theorem \ref{posmassineq} to manifolds with multiple ends. The asymptotic boundary value is one at the end in consideration and zero at the other ends. In particular, this improves Bray's lower bound \cite[Thm. 8]{Bra}
\begin{equation}
m_{ADM}(g)\geq \inf\int_M8\abs{\nabla w}^2dV\qquad (n=3),
\end{equation}
where the infimum is taken over all smooth functions with the boundary conditions just mentioned. More importantly, we get a non-zero lower bound even in the one-ended case if the scalar curvature does not vanish identically.
\end{remark}

As an application of Theorem \ref{posmassineq} we give a Ricci flow proof of the following rigidity statement in the positive mass theorem:

\begin{theorem}\label{rigidity}
Assume $(M^n,g_{ij})$ is a complete asymptotically flat manifold of order $n-2$ with nonnegative scalar curvature, and that $M$ is spin or $n\leq 7$. Then $m_{ADM}(g)=0$ implies that $(M,g)$ is isometric to $\mathbb{R}^n$.
\end{theorem}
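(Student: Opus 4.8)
The plan is to combine the quantitative lower bound of Theorem~\ref{posmassineq} with a unique-continuation/rigidity argument for the conformal Laplacian, and then feed the resulting Ricci-flat conclusion into a classical flatness result. Concretely, assume $m_{ADM}(g)=0$. By Theorem~\ref{posmassineq} there is a unique positive solution $w\to 1$ at infinity of $\left(-\tfrac{4(n-1)}{n-2}\Lap+R\right)w=0$, and
\begin{equation}
0=m_{ADM}(g)\geq \int_M\left(\tfrac{4(n-1)}{n-2}\abs{\D w}^2+Rw^2\right)dV\geq 0,
\end{equation}
so the integrand vanishes identically: $\D w\equiv 0$ and $Rw^2\equiv 0$. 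Since $w>0$ everywhere, this forces $w\equiv 1$ (using $w\to 1$ at infinity) and $R\equiv 0$. Thus $g$ itself has vanishing scalar curvature.

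Next I would upgrade $R\equiv 0$ to Ricci-flatness. This is the step where the Ricci flow enters, matching the paper's stated aim of giving a Ricci flow proof: run the Ricci flow $\partial_t g=-2\Rc$ starting from $g$. Under the flow the scalar curvature satisfies $\partial_t R=\Lap R+2\abs{\Rc}^2$; since $R\equiv 0$ at $t=0$ and asymptotic flatness is preserved for short time, a maximum-principle argument on the non-compact manifold (with the decay controlling the behavior at infinity) shows $R(t)\geq 0$ and, differentiating at $t=0$, that $\partial_t R|_{t=0}=2\abs{\Rc}^2\leq 0$ wherever $R$ stays at its minimum value zero. More cleanly: if $\Rc\not\equiv 0$ somewhere, then $R$ becomes strictly positive instantaneously in that region, while the ADM-mass is monotone (it is constant along Ricci flow with the relevant decay, or at least non-increasing), and a Ricci-flow version of Theorem~\ref{posmassineq} applied at time $t>0$ would give $m_{ADM}(g(t))>0$, contradicting $m_{ADM}(g(t))=m_{ADM}(g)=0$. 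Hence $\Rc\equiv 0$ on $(M,g)$.

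Finally, a complete asymptotically flat Ricci-flat manifold of order $\tau>\tfrac{n-2}{2}$ is isometric to $\RR^n$: Ricci-flatness plus the decay of the metric to $\delta$ forces the full curvature tensor to decay, and a Bishop--Gromov / splitting argument (or directly the fact that an asymptotically flat Ricci-flat manifold has a well-defined mass which vanishes, combined with the Bando--Kasue--Nakajima type decay estimates) shows the metric is flat; a flat, complete, simply-connected-at-infinity asymptotically flat manifold is $\RR^n$. One should be slightly careful here that asymptotic flatness of order $\tau>\tfrac{n-2}{2}$ does not a priori give enough curvature decay, but combined with $\Rc\equiv 0$ elliptic estimates bootstrap the decay of $\Rm$, closing the argument.

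The main obstacle I expect is the non-compactness in the Ricci flow step: justifying short-time existence, preservation of asymptotic flatness, and the validity of the maximum principle / mass-monotonicity for the Ricci flow on an asymptotically flat manifold all require care (these are technical but known, e.g. via Shi's estimates and the work on Ricci flow of asymptotically flat or asymptotically Euclidean metrics). An alternative that avoids the flow — deforming $g$ in the direction of $-\Rc$ to strictly decrease the mass, exactly as alluded to in the introduction — has the same analytic difficulties, so the flow formulation is the natural one given the paper's framing, and the cited companion papers presumably supply the needed non-compact Ricci flow technology.
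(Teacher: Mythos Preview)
Your proposal is correct and follows essentially the same approach as the paper: run the Ricci flow, use the evolution equation $\partial_t R=\Lap R+2\abs{\Rc}^2$ to make the scalar curvature strictly positive, invoke the preservation of $m_{ADM}$ under the flow, and apply Theorem~\ref{posmassineq} at positive time to reach a contradiction with $m_{ADM}=0$. Your preliminary step (deducing $R\equiv 0$ and $w\equiv 1$ directly from Theorem~\ref{posmassineq} at $t=0$) is correct but unnecessary --- the paper goes straight to the contradiction argument without it --- and the non-compact Ricci flow technicalities you flag (short-time existence, preservation of asymptotic flatness and of the mass) are handled in the paper simply by citing \cite{DM,OW}.
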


\begin{proof}
Suppose towards a contradiction that $\Rc\neq0$ somewhere. Consider the Ricci flow  $g(t)$ starting at $g$. The flow exists for a short time and preserves $R\geq 0$, the asymptotic flatness, and $m_{ADM}$ \cite{DM,OW}. In fact, from the evolution equation
\begin{equation}
\partial_tR=\Lap R+2\abs{\Rc}^2
 \end{equation}
 we see that $R$ becomes strictly positive. Thus, together with Theorem \ref{posmassineq} we obtain
 \begin{equation}
 m_{ADM}(g)=m_{ADM}(g(t))>0,
 \end{equation}
 a contradiction. So $(M,g)$ is Ricci-flat, and it is easy to conclude that it is in fact isometric to $\mathbb{R}^n$.
\end{proof}

\begin{remark}
As mentioned in the introduction, the ideas discussed in this section motivate our mass decreasing flow in dimension three \cite{H2}.
\end{remark}

\makeatletter
\def\@listi{%
  \itemsep=0pt
  \parsep=1pt
  \topsep=1pt}
\makeatother
{\fontsize{10}{11}\selectfont

{\sc Department of Mathematics, ETH Z\"{u}rich, Switzerland}\\
\textit{email:} robert.haslhofer@math.ethz.ch\\

\end{document}